\numberwithin{equation}{section}
\newtheorem{lemma}{Lemma}
\newtheorem{remark}{Remark}
\renewcommand{\Xi}{X^i(t)}
\newcommand{\Xit}{X^i_t}
\newcommand{\pit}{p_f^i}
\newcommand{\vf}{v_f}
\newcommand{\Hxp}{\text{H}(f(X^i(t)) - f(\pit))}
\newcommand{\Hvp}{\text{H}(f(\vf) - f(\pit))}
\newcommand{\Hxv}{\text{H}(f(X^i(t)) - f(\vf))}
\newcommand{\Hpv}{\text{H}(f(\pit)-f(\vf))}
\newcommand{\Hexp}{\text{H}^\epsilon(f(X^i(t)) - f(\pit))}
\newcommand{\Hevp}{\text{H}^\epsilon(f(\vf) - f(\pit))}
\newcommand{\Hexv}{\text{H}^\epsilon(f(X^i(t)) - f(\vf))}
\newcommand{\Hepv}{\text{H}^\epsilon(f(\pit)-f(\vf))}
\begin{document}
	
\title[CBO with personal best]{Consensus-Based Global Optimization \newline with Personal Best}

\author[C.~Totzeck]{Claudia Totzeck}
\address[C.~Totzeck]{School of Business Informatics and Mathematics, University of Mannheim, B 6, 68159 Mannheim, Germany}
\email[]{totzeck@uni-mannheim.de}

\author[M.-T.~Wolfram]{Marie-Therese Wolfram}

\address[M.-T.~Wolfram]{ Mathematics Institute, University of Warwick, Gibbet Hill Road, CV47AL Coventry, UK}
\email[]{m.wolfram@warwick.ac.uk}
\address[M.-T.~Wolfram]{RICAM, Altenbergerstr. 69, 4040 Linz, Austria}

\begin{abstract}

In this paper we propose a variant of a consensus-based global optimization (CBO) method that uses personal best information in order to 
compute the global minimum of a non-convex, locally Lipschitz continuous function. The proposed approach is motivated by the original particle swarming algorithms, in which particles 
adjust their position with respect to the personal best, the current global best, and some additive noise. The personal best information along an individual trajectory 
is included with the help of a weighted mean. This weighted mean can be computed very efficiently due to its accumulative structure. It enters the dynamics via an additional drift term. We illustrate the performance with a toy example, analyze the respective 
memory-dependent stochastic system and compare the performance with the original CBO with component-wise noise for several benchmark problems. The proposed method has a higher success rate for computational
	experiments with a small particle number and where the initial particle distribution is disadvantageous with respect to the global minimum.
\end{abstract}

\maketitle

\section{Introduction}

Interacting particle systems play an important role in many applications in science - on the one hand as a modeling framework for social and biological
systems, on the other as a  tool for computational algorithms used in data science. In the latter case the collective behavior of interacting
particle systems is used to solve high-dimensional problems, often resulting from non-convex optimization tasks in data science. Well known
algorithms include particle swarm optimization (PSO) \cite{EberhartKennedy}, ant colony optimization \cite{Dorigo} or evolutionary \cite{Fogel} and genetic algorithms \cite{Holland}.

PSO was first introduced in \cite{EberhartKennedy} and has been successfully used in engineering applications \cite{PoliKennedyBlackwell}. Each particle in a PSO algorithm adjusts its 
position due to information of the global best, personal best and a noise term that allows for exploration of its neighborhood. Consensus-based optimization (CBO) \cite{CBO,CBO2} 
combines the idea of swarm intelligence with consensus formation techniques \cite{ Seneta,Hegselmann, Tadmor} to obtain a global optimization algorithm for non-convex high-dimensional 
problems. On the one hand particles explore the state space via an amplitude modulated random walk. On the other a drift term convects them towards the weighted global best. 
The method was first introduced in \cite{CBO} and analyzed at the mean-field level in \cite{CBO2}. Recent developments of CBO include component-wise diffusion and utilize random mini-batch ideas to reduce the computational cost of calculating the weighted average \cite{Jose}. Other contributions investigate a CBO dynamic that is restricted to the sphere \cite{Fornasier1, Fornasier2}. 
Also, convergence and error estimates for time-discrete consensus-based optimization algorithms have been discussed \cite{HaJinKim}. CBO-type systems are related to large interacting
particle systems, in which the dynamics are driven by weighted average quantities, see \cite{BFOZ2019,CJK2018, WDHK2013}.

The model proposed in the work is based on the component-wise diffusion variant introduced in \cite{Jose} and combines it with personal best information. 
This adjustment is motivated by the original work on PSO by Eberhart and Kennedy \cite{EberhartKennedy}, where the particles move towards a (stochastic) linear combination 
of their personal best or the common global best position. The new information leads to an additional drift term in the dynamics. 
We investigate two types of memory effects - either using a weighted personal best over
time or the personal best value in the past. The latter corresponds to record processes, see \cite{W2013} 
for an overview. The former is used in the presented analysis and approximates the personal best of each particle. 
We expect by arguments similar to the Laplace principle that the weighted mean converges towards the personal best. 

The proposed stochastic dynamics with weighted personal best fall into the class of stochastic functional differential equations. 
These equations are in general non-Markovian and their mean-field limit has been investigated in special cases only. For example, Gadat and Panloup \cite{GP2014} investigated a non-Markovian process with  memory, which corresponds to the weighted average of the drift all along 
the particle's trajectory. This memory term is of a special form allowing them to rewrite the system as a 2-dimensional non-homogeneous Markovian dynamical system. 
Moreover, they exploit this special structure to analyze the existence and long time behavior of solutions as well as the mean-field limit. The strategy of increasing the dimension to get around the non-Markovian nature goes back to the Mori-Zwanzig formalism, see \cite{Z2001}. This
strategy was recently adapted for non-Markovian interacting dynamics by Duong and Pavliotis in \cite{DP2018}. Another interesting work by Kuntzmann, see \cite{K2014}, investigates the ergodic behavior of 
self-interacting diffusions depending on the empirical mean of the process. The proposed generalization of CBO with weighted personal
best does not fall into this category, hence the derivation and analysis of the respective mean-field dynamics, which often give useful insights into the dynamics, is to the
best of the authors' knowledge open.  This applies as well for personal best, where the update of the best function value corresponds to a record process. Hence, we focus 
on the well-posedness of the stochastic system as well as a detailed computational investigation of the dynamics.

This paper is organized as follows: we introduce the particle dynamics with (weighted) personal best in Section \ref{s:modelling} and illustrate 
its dynamics with first toy examples. Section \ref{s:analysis} discusses well-posedness and existence of solutions to the SDE model with weighted personal best. Section \ref{s:numerics}
presents extensive computational experiments of various benchmark optimization problems.

\section{Consensus based optimization with personal best}\label{s:modelling}
In this section we discuss how personal best information can be included in consensus based optimization
algorithms as proposed by Carrillo and co-workers in \cite{CBO, CBO2, Jose}. We start by introducing the notation before continuing with the modeling.

\subsection{Notation}
We refer the euclidean norm by $|x| = (x_1^2 + \dots + x_d^2)^{1/2}$ for $x \in \mathbb{R}^d$  and $|Y| = (\sum_{i,j=1}^{dN} Y_{ij}^2)^{1/2}$ for
matrices $Y \in \mathbb R^{dN \times dN}$. The set of natural numbers without $0$ is denoted by $\mathbb N^* ={1,2,3,\dots}$ and the half-line $[0,\infty)$ by $\mathbb R^+$. 
A vector valued function or vector $x \in \mathbb R^{dN}$ is assumed to be of the form $x = (x^1,\dots,x^N )$ with $x^i \in \mathbb R^d.$
When discussing the stochastic systems we follow the notation of \cite{Pardoux}: $(\Omega, \mathcal F, \mathbb P, \{ \mathcal F_t \}_{t\ge 0})$ corresponds to the
stochastic basis with sample space $\Omega,$ filtration $\mathcal F$ and probability function $\mathbb P.$ Moreover, $S_d^p[0,T]$ is the space of (equivalence classes of) $\mathcal P$ -measurable continuous stochastic processes $X \colon \Omega \times [0,T] \rightarrow \mathbb R^{dN}$  such that
\[
\mathbb E \sup\limits_{t \in [0,T]} |X_t|^p < +\infty \quad \text{ if } p>0.
\]
Two processes $X, Y$ are called equivalent if $(X_t = Y_t \forall t \in [0,T])$ $\mathbb
 P$-almost surely ($\mathbb
 P$-a.s.). Furthermore, $S_d^p$ is the space of (equivalence classes of) $\mathcal P$-measurable continuous stochastic processes 
 $X \colon \Omega \times \mathbb R_+ \rightarrow \mathbb R^{d}$  such that for all $T > 0$ the restriction $X_{|[0,T]}$ of $X$ to $[0,T]$ belongs to 
 $S_d^p[0,T]$. Analogously, we define $\Lambda_d^p(0,T)$ as the space of (equivalent classes) of $\mathcal P$-measurable processes $X \colon (0,T[ \rightarrow \mathbb R^d$ 
 such that
 \[
 \int_0^T |X_t|^2 dt < +\infty\quad \mathbb P\text{-a.s.} \omega \in \Omega \text{ if } p = 0 \quad\text{and}\quad  \mathbb E \left( \int_0^T |X_t|^2 dt \right)^{p/2} < +\infty\quad  \text{ if } p > 0.
 \]
 We refer to $\Lambda_d^p$ as the space of (equivalence classes of) $\mathcal P$-measurable continuous stochastic processes 
$X \colon \Omega \times (0,+\infty) \rightarrow \mathbb R^{d}$ for which for all $T > 0$ the restriction $X_{|[0,T]}$ of $X$ to $[0,T]$ belongs to $\Lambda_d^p(0,T).$ Moreover, for any 
 $\phi \in C(\mathbb R_+, \mathbb R^{dN})$ we define
 $$\| \phi \|_t := \sup\limits_{0 \le s \le t} |\phi(s)|= \sup\limits_{0 \le s \le t} \left(\phi_1(s)^2 + \dots + \phi_{dN}(s)^2\right)^{1/2}.$$
 
\subsection{The model}
We wish to approximate the global minimum 
\begin{align}
 \min_{x \in \mathbb{R}^d} f(x),
\end{align}
of a given non-negative, continuous objective function $f: \mathbb{R}^d \rightarrow \mathbb{R}$.
In doing so we consider $N \in \mathbb{N}$ particles and 
denote the position of the $i$-th particle at time $t$ by $X_t^i :=\Xi \in \mathbb{R}^d$, $i=1, \ldots N$. Note that we use $X_t = X(t) = (X^1(t), \ldots X^N(t)) \in \mathbb{R}^{dN}$, when referring to the
 positions of all particles at time $t$. In CBO  particles compare their current function value with a weighted mean value based on the current information of the whole system. A particle moves  towards the position of the weighted mean, if the function value of the weighted mean is lower.  Following the ideas of \cite{CBO,CBO2}, we use the weighted average 
\begin{align}\label{e:vf}
 v_f(t) = v_f[X_t] = \frac{\sum_{i=1}^N X^i(t) \exp(-\alpha f(X^i(t)))}{\sum_{i=1}^N \exp(-\alpha f(X^i(t)))},
\end{align}
with $\alpha > 0$, to approximate the global best, that is,~the particle with the lowest function value. Note that even though the weighted average uses only information of the current time step, it is assumed to approximate the global best over time as well, since a particle that is close to the weighed average experiences only small drift and diffusion.
The parameter $\alpha$ scales the influence of local and global minima in the weighted mean. In fact, for $\alpha = 0$ the weights are independent of the function values, 
and all particles are weighted equally. For $\alpha > 0$ the particle with the best function value has the largest weight. Moreover, the Laplace principle from large deviations theory \cite{DemboZeitouni} assures that $v_f(t)$ converges to the global best, as $\alpha \rightarrow \infty.$ 
For more details on the Laplace principle in the CBO context, we refer to \cite{CBO, CBO2}.

In the original version of PSO, see \cite{Krikpatrick}, particles compare their current position with the global best as well as their personal best value up to
that time. We propose two
different approaches how to include the personal best $p_i$ of the i-th particle. First, we consider the true personal best by setting
\begin{align}\label{e:pb}
 P_f^i(t) = \argmin_{Y \in \lbrace X^i(s) \; \colon  s \in [0,t]\rbrace} f(Y).
\end{align}
Moreover, the personal best can be approximated similarly to the global best, $v_f(t)$, defined in \eqref{e:vf}. Hereby, we use the entire trajectory in the past and refer to this trajectory by $X = (X^1,\dots, X^N)$ with $X ^i \in C(\mathbb R_+, \mathbb R^d)$ for all $i=1,\dots,N$.  Let $X^i_0$ denote
the initial position of the $i$-th particle at time $t=0$, the weighted mean over time of the $i$-th particle is defined by
\begin{align}\label{e:wpb}
p_f^i(t) = \begin{cases} X_0^i, & t = 0, \\\int_0^t X_s^i \exp(-\beta f(X_s^i)) ds \Big/ \int_0^t \exp(-\beta f(X_s^i)) ds, &\text{otherwise,} \end{cases} 
\end{align}
with  $\beta > 0$. Note that the well-posedness result presented in Section \ref{s:analysis} holds for the weighted personal best \eqref{e:wpb} only. Again, 
by the Laplace principle, we expect that $p_f^i(t) \rightarrow P_f^i$ as $\beta \rightarrow \infty$.\\
We recall that particles either move towards the global or personal best state. 
The respective CBO dynamics for the $i^{\text{th}}$ particle, $i=1, \ldots N$ are then given by the following SDE:
%\begin{subequations}
	\begin{equation}\label{eq:particlePSO}
	dX^i(t) = \left[ -\lambda(t,X) (X^i(t)-\vf)- \mu(t,X) (X^i(t)-\pit) \right]\,d t 
	 + \sqrt{2} \sigma \text{diag}(X^i(t) - \vf)\, dB_t^i, 
\end{equation}
where
\begin{align*} 
\lambda(t,X) &=  \Hxv\, \Hpv,\\ \mu(t,X) &= \Hxp\, \Hvp.
\end{align*}
The function $\text{H}$ corresponds to the Heaviside function and $\sigma > 0$ denotes the standard deviation.
System \eqref{eq:particlePSO} is supplemented with the initial condition $X_0^i = \xi_i$, $i=1,\dots,N.$ 
The drift and diffusion are motivated by the following considerations:
\begin{enumerate}[nosep]
	\item If the global best $\vf$ is better than the current position $\Xit$ \text{and} the personal best $\pit$, the particle moves towards the current global best $\vf$.
	\item If the personal best $\pit$ is better than the current position $\Xit$ \text{and} the global best $\vf$, the particle moves towards the personal best $\pit$.
	\item If none of the above holds, the particle still explores the function landscape via Brownian motion until it reaches the global best $\vf$.
\end{enumerate}
Note that the drift coefficients depend on the past of each particle, hence system \eqref{eq:particlePSO} is non-Markovian. The form of the memory does not allow us
to use existing results, such as \cite{Pardoux} to rewrite the system. Hence the existence and form of the respective mean-field model is, up to the authors' knowledge, not known.
\begin{remark}
(1) The CBO version proposed in \cite{Jose} can be recoverd by setting
\begin{align}\label{e:cbo}
\lambda(t,X) \equiv \lambda, \qquad \mu(t,X) \equiv 0.
\end{align}
(2) Note that the personal best \eqref{e:pb} and weighted personal best \eqref{e:wpb} can be computed very efficiently due to their accumulative structure;
this does not significantly increase 
the computational cost.
\end{remark}
Throughout this manuscript we will refer to the dynamics defined by \eqref{eq:particlePSO} with \eqref{e:cbo} as CBO, and to \eqref{eq:particlePSO} with \eqref{e:pb} or \eqref{e:wpb}
as personal best (PB) or weighted personal best (wPB), respectively.

\subsection{Toy example: CBO vs. PB dynamics}\label{s:toy}
In the following we will illustrate the differences between CBO and (w)PB using a 1D toy objective function $f$ and $3$ particles.
We consider a double well-type $f$ of the form:
\[
f(x) = (x^2-1)^2 + 0.01 x + 0.5.
\]
For this function, shown in Figure~\ref{fig:Setting1} the global and local minimum, located at  $x =-1.00125$ and $x = 0.998748$ respectively, are very close. 
In the following we perform $1000$ Monte Carlo (MC) simulations with deterministic initial conditions $\xi $.
We count a run as run successful, 
if the final position of the particles satisfy $|v_f(T) - X^i(T)| < 0.4$ for all $i=1,\dots,N.$ The final time is set to $T=100,$ the time step size 
$dt = 10^{-3}$ and $\beta$ in \eqref{e:wpb} to $\beta = 30$. 
We study the dynamics for the following two initial conditions:
\begin{enumerate}[label = (IC\arabic*), nosep, leftmargin=40pt]
	\item Initialize 2 particles near the local minimizer and 1 particle near the global minimizer. \label{i:IC1}
	\item Initialize 1 particle near the local minimizer and 2 particles near the global minimizer. \label{i:IC2}
\end{enumerate}

%\begin{minipage}{\textwidth}
\begin{minipage}{0.55\textwidth}
\begin{figure}[H]
\centering
	\includegraphics[scale=0.5]{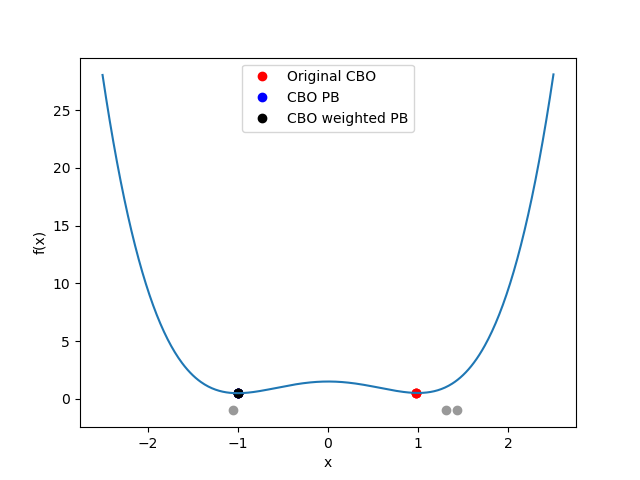}%
	\caption{ Corresponding to (IC1) Initial positions are depicted in gray. Points in different colors show $v_f(T)$ at $T= 100.$}
	\label{fig:Setting1}	
	\end{figure}
\end{minipage}
\begin{minipage}{0.4\textwidth}
\begin{table}[H]
\caption{Success rates}\label{t:toyIC1}
\begin{tabular}{ l | c | c }
	scheme & success rate & success rate \\ 
	& $\alpha =10$ &  $\alpha =30$ \\\hline
	 CBO & 30 \% &  60,9\% \\
	 PB & 100 \% & 100 \% \\
	 wPB & 100 \% & 100  \% 
\end{tabular}	
\end{table}
\end{minipage}	
%\end{minipage}
\vspace{1em}

The initial positions \ref{i:IC1} and \ref{i:IC2} of the particles correspond to the gray dots in Figure~\ref{fig:Setting1} and in Figure~\ref{fig:Setting2}, respectively. We discuss \ref{i:IC1} first.  In this situation the weighted average, $v_f(0),$ is located near $x=0.9,$ thus, the Heaviside functions are zero 
and the system would be in a stationary state for $\sigma = 0$. \\
For $\sigma > 0,$ the diffusion term drives the dynamic and the particles are exploring their neighborhood. Due to the multiplicative factor, the particle on the left is exposed to more diffusion than the particles on the right. In case of the CBO scheme, the particle on the left has a high probability of
jumping out of the basin of the global minimum. Then, all particles concentrate near the local minimum. For one run, this behavior is illustrated by the positions of $v_f(T)$ shown in 
Figure~\ref{fig:Setting1} (left). This alone does not reflect the concentration which becomes apparent in Figure~\ref{fig:S1}. In fact, the orange lines show fluctuations for small times but stabilize quickly indicating that no diffusion is present and thus that all particles are concentrated. This behavior changes when personal best information is included. Here, particles still explore their neighborhood, however
at some point their current positions are worse than their personal best, and hence the drift starts pulling them back towards their personal best. This behavior is also illustrated
by the success rates stated in Table~\ref{t:toyIC1}. We see that (w)PB outperform PB for large and small values of $\alpha$. The 'pull-back' effect slows
down the convergence of (w)PB - we observe that the respective energies decrease slower than for CBO in Figure \ref{fig:S1}. Nevertheless, they find the global minimum.
\begin{figure}[htp]%
\centering
	\includegraphics[scale=0.45]{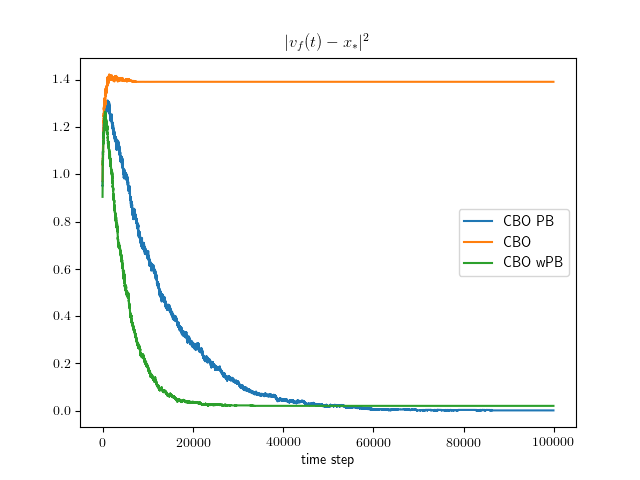}%
	\hspace*{0.5em}
	\includegraphics[scale=0.45]{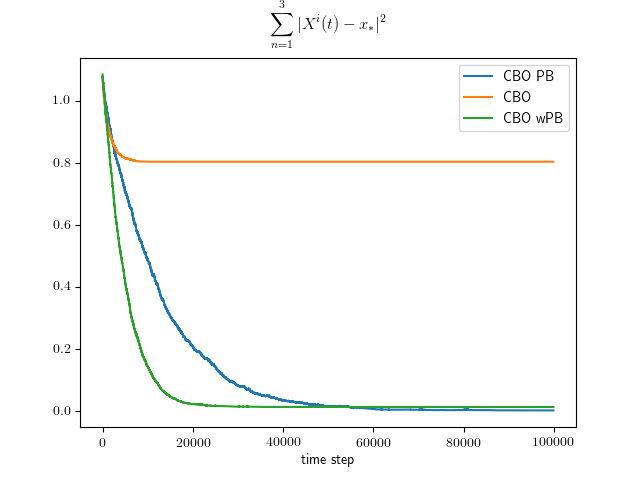}
		\caption{\ref{i:IC1} CBO is not successful while CBO with personal best finds a good approximation of the global minimizer. 
		The plot on the left shows the mean of the distances of $v_f(t)$ to the global minimizer. 
		The plot on the right shows the mean energy $\sum_{i=1}^3 |X^i(t) - x_*|^2$. The mean involves $1000$ Monte Carlo runs.}	
		\label{fig:S1}
\end{figure}

Next we consider initial condition \ref{i:IC2}. Again, in the deterministic case $\sigma = 0$ the initial configuration is stationary. For $\sigma >0$ the particles on the left are less diffusive than the particle on the right. 
Therefore, it is more likely that the particle on the right jumps into the basin of the global minimum. This is illustrated in Figure \ref{fig:Setting2} and confirmed 
by the success rates in Table \ref{t:toyIC2}. Again, CBO converges faster than (w)PB, see Figure~\ref{fig:S2}. Nevertheless, the function values at the point of concentration are smaller for (w)PB which means that the slower algorithms find better approximations. 
Note that the scale of the time step-axis is much smaller than in Figure \ref{fig:S1}.

%\begin{minipage}{\textwidth}
\begin{minipage}{0.55\textwidth}
\begin{figure}[H]
\centering
\includegraphics[scale=0.4]{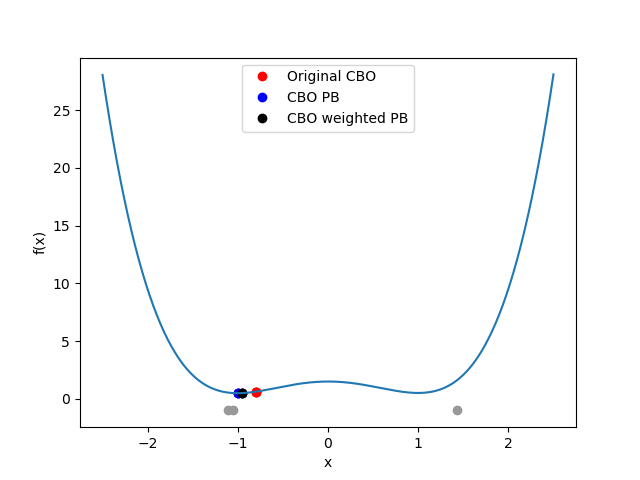}%
\caption{ Corresponding to (IC2) The initial positions are depicted in gray. The points in different colors show $v_f(t)$ at $t= 10000.$ }
\label{fig:Setting2}	
\end{figure}
\end{minipage}
\begin{minipage}{0.4\textwidth}
\begin{table}[H]	
\caption{Success rates}\label{t:toyIC2}
		\begin{tabular}{ l | c | c}
		scheme & success rate & success rate \\ 
		& $\alpha =10$ &  $\alpha =30$ \\\hline
			CBO & 91,6 \% & 98,1 \% \\
			PB & 100 \% & 100 \% \\
			wPB & 100 \% & 100 \%
		\end{tabular}	
		
		\end{table}
	\end{minipage}
%\end{minipage}

\begin{figure}[htp]
\centering
	\includegraphics[scale=0.45]{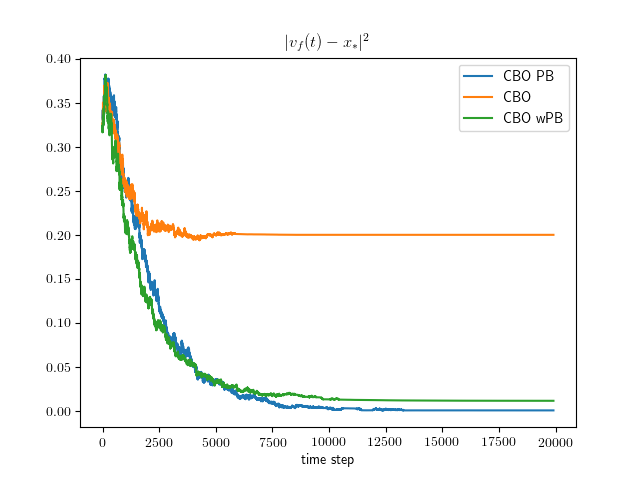}
	\includegraphics[scale=0.45]{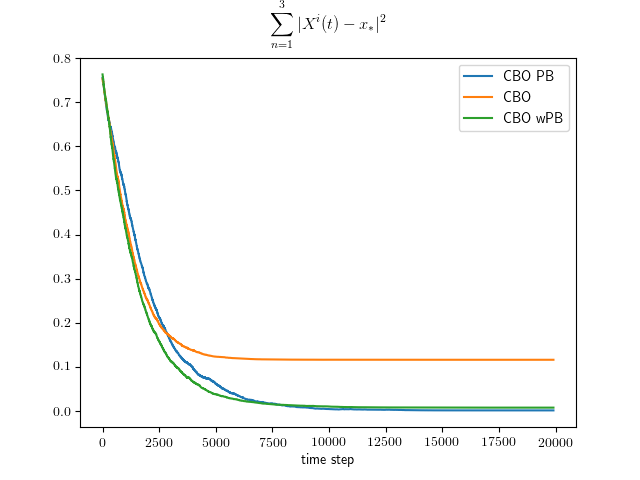}
	\caption{\ref{i:IC2} All schemes find reasonable approximations of the minimizer. The results of the methods with personal best information have a better accuracy. The plot on the left shows the mean of the distances of $v_f(t)$ to the global minimizer. The plot on the right shows the mean energy $\sum_{i=1}^3 |X^i(t) - x_*|^2$. The mean involves $1000$ Monte Carlo runs. As expected the particles following the CBO scheme are concentrating very fast. The methods with personal best information need more time for stabilization. The one with weighted personal best is slightly faster than with one with true personal best values.}
	\label{fig:S2}
\end{figure}

\section{Well-posedness results}\label{s:analysis}
In the following we discuss well-posedness of the wPB model. We begin by considering CBO with component-wise diffusion, which was proposed in \cite{Jose}.

\subsection{Well-posedness of CBO with component-wise diffusion }
\begin{theorem}\label{t:wellposed}
Let $f$ be locally Lipschitz and $N \in \mathbb{N}$. Then system \eqref{eq:particlePSO} with $\lambda(t,X) \equiv \lambda, \; \mu(t,X) \equiv 0$ admits a unique strong solution for any initial condition 
$\xi = (\xi_1, \dots, \xi_N)$ satisfying $\mathbb E\,|\xi|^2 < \infty$.
\end{theorem}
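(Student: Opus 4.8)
The plan is to reduce the claim to the classical existence-and-uniqueness theorem for Itô systems with locally Lipschitz coefficients satisfying a linear growth bound (as in \cite{Pardoux}), so the whole task becomes verifying these two structural properties for the coefficients of \eqref{eq:particlePSO}. Under the stated hypotheses the drift and diffusion read
\[
b^i(X) = -\lambda\bigl(X^i - \vf[X]\bigr), \qquad \Sigma^i(X) = \sqrt{2}\,\sigma\,\mathrm{diag}\bigl(X^i - \vf[X]\bigr),
\]
so all the nonlinearity is concentrated in the weighted mean $\vf[X]$; consequently it suffices to control the map $X \mapsto \vf[X]$.

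First I would record that $\vf[X]$ is a convex combination of the particle positions. Setting $w_i(X) = \exp(-\alpha f(X^i))\big/\sum_{j=1}^N \exp(-\alpha f(X^j))$ one has $w_i(X) > 0$ and $\sum_i w_i(X) = 1$, so $\vf[X] = \sum_{i=1}^N w_i(X)\,X^i$ lies in the convex hull of $\{X^1,\dots,X^N\}$ and therefore $|\vf[X]| \le \max_i |X^i| \le |X|$. Combined with the triangle inequality this immediately yields the linear growth bounds $|b^i(X)| \le 2\lambda |X|$ and $|\Sigma^i(X)| \le C(\sigma,N)\,|X|$, which are exactly what is needed to exclude finite-time explosion.

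The substantial step is the local Lipschitz continuity of $\vf$. Fixing $R > 0$ and working on $\{|X| \le R\}$, continuity of $f$ gives a bound $f \le M_R$ on the ball of radius $R$, hence every weight satisfies $\exp(-\alpha f(X^i)) \ge \exp(-\alpha M_R)$ and the denominator is bounded below by $N\exp(-\alpha M_R) > 0$ uniformly. I would then estimate $\vf[X] - \vf[Y]$ by writing it as a single fraction over this uniformly positive denominator and splitting each numerator term as
\[
X^i e^{-\alpha f(X^i)} - Y^i e^{-\alpha f(Y^i)} = (X^i - Y^i)\,e^{-\alpha f(X^i)} + Y^i\bigl(e^{-\alpha f(X^i)} - e^{-\alpha f(Y^i)}\bigr);
\]
using that $y \mapsto e^{-\alpha y}$ is globally Lipschitz and that $f$ is Lipschitz on the ball then produces a constant $L_R$ with $|\vf[X] - \vf[Y]| \le L_R |X - Y|$ for $|X|,|Y| \le R$. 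Since $b^i$ and $\Sigma^i$ are affine combinations of the identity and $\vf$, they inherit this local Lipschitz property.

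With both properties in hand, local Lipschitz continuity delivers pathwise uniqueness and a unique strong solution up to an explosion time, and the global linear growth bound together with a Gr\"onwall estimate on $\mathbb E\,\|X\|_t^2$ shows the explosion time is a.s.\ infinite and that $\mathbb E \sup_{t\in[0,T]} |X_t|^2 < \infty$ for every $T$; here the hypothesis $\mathbb E\,|\xi|^2 < \infty$ furnishes the integrability of the initial datum. I expect the only real obstacle to be the local Lipschitz estimate for $\vf$: the ratio structure forces one to simultaneously control the variation of the numerators and the non-degeneracy of the denominator, and because $f$ is merely locally Lipschitz the constant $L_R$ must blow up as $R \to \infty$. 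The argument therefore has to be arranged so that non-explosion rests solely on the global linear growth bound and never on a (nonexistent) global Lipschitz constant.
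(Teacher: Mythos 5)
Your proposal is correct and takes essentially the same approach as the paper: both arguments rest on the convex-combination bound $|v_f[X]| \le |X|$ and the local Lipschitz continuity of $X \mapsto v_f[X]$ on balls (obtained exactly as you describe, by bounding the denominator below and splitting the numerator; cf.\ Lemma~\ref{lem:estimatesOnPV}), and then invoke a classical existence/uniqueness result for locally Lipschitz coefficients together with a growth condition that rules out explosion. The only cosmetic difference is that the paper verifies the growth condition in the Lyapunov form \eqref{eq:inequ} and cites Durrett's theorem, whereas you verify coefficient-wise linear growth and close with a Gr\"onwall estimate.
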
	
A detailed proof can be found in the Appendix. Let us just emphasize that the estimates in the proof of Theorem \ref{t:wellposed} are independent of the dimension, $d,$ as was already highlighted in \cite{Jose} for the mean-field setting. This is in contrast to \cite{CBO,CBO2}, where the estimates depend on $d.$

\subsection{Well-posedness in case of weighted personal best}
Next, we present an existence and uniqueness result for the proposed SDE model with weighted personal best and smoothed Heaviside functions. 
Note that the structure of the weighted personal best suggests the idea of introducing
	new variables for the numerator and the denominator. This re-formulation converts the non-Markovian process into a Markovian system for times $t>0$, 
	but violates the initial condition.
	We therefore use different
 proofs for properties of SDEs with local Lipschitz conditions as well as path-dependent SDEs that can be found in the literature \cite{Pardoux}. 
To the authors' knowledge none of them covers the case of path-dependent SDEs with local Lipschitz conditions. 
In the following we present a proof which combines the two techniques to obtain a well-posedness result.

We assume that the regularized Heaviside function $H^\epsilon$ satisfies the following conditions:
\begin{enumerate}[label=(A\arabic*), nosep]
	\item Let $0 \le H^\epsilon(x) \le 1$ for all $x \in \mathbb{R}$.\label{a:Hbound}
	\item There exists a constant $C>0$ such that 
\begin{align}
	|H^\epsilon(x) - H^\epsilon(y)| \le \frac{C}{\epsilon}|x-y| \qquad \text{	for all $x,y \in \mathbb{R}.$}
\end{align}\label{a:HLip} 
\end{enumerate}
This corresponds to the following regularized problem 
\begin{subequations}\label{eq:regularisedPSO}
\begin{equation}\label{eq:regularized}
dX(t) = \left[ -\lambda^\epsilon(t,X) (X(t)-\vf)- \mu^\epsilon(t,X) (X(t)-\pit) \right]\,d t 
+ \sqrt{2} \sigma \text{diag}(X(t) - \vf)\, dB_t, 
\end{equation}
with
\begin{align} \label{eq:smoothedcoeff}
\lambda^\epsilon(t,X) &=   \text{diag}\Big(\Hexv\, \Hepv \Big)_{i=1,\dots,N} \in \mathbb{R}^{dN\times dN},\\ \mu^\epsilon(t,X) &=  \text{diag}\Big( \Hexp\, \Hevp\Big)_{i=1,\dots,N}\in \mathbb{R}^{dN\times dN}.
\end{align}
\end{subequations}
Moreover, we assume that the objective function $f$ satisfies the following properties:
\begin{enumerate}[resume,label=(A\arabic*), nosep] 
\item  Positivity: it holds $0 \le  f(x)  \quad \text{ for all } x \in \mathbb{R}^d,$
\label{a:fbound}
\item Quasi-local Lipschitz condition: for any $n < \infty$ and $|x|,|y| \le n$ it holds $$ |f(x) - f(y)| \le L_f|x-y|, $$
with a constant $L_f>0$ depending on $n$ only. \label{a:fLip}
\end{enumerate}
\begin{remark}
The well-known regularization of the Heaviside function
\[
H^\epsilon(x) = \frac{1}{2} + \frac{1}{2} \tanh\left(\frac{x}{\epsilon}\right)
\] 
satisfies the assumptions \ref{a:Hbound} and \ref{a:HLip}.
Note that in the context of optimization problems, the positivity assumption on $f$ is not too restrictive. 
Since $f$ corresponds to a minimization functional it is naturally bounded from below and can be shifted to satisfy the positivity constraint. 
\end{remark}

The following proof is based on a combination of arguments of Theorem 3.17 and Theorem 3.27 in \cite{Pardoux} - this yields well-posedness of \eqref{eq:regularisedPSO}. We  begin with two lemmata providing necessary estimates. The first lemma is concerned with properties of the weighed averages. 
Note that the global best $v_f$ depends on the current state of the process and the personal best $p_f$ on the whole trajectory. 
We therefore write $v_f[\varphi(t)] = v_f(t) $ and $p_f[\varphi] = p_f$.

\begin{lemma}\label{lem:estimatesOnPV}
	Let $f$ satisfy \ref{a:fbound} and \ref{a:fLip}, $N \in \mathbb N$  and $\varphi = (\varphi^1, \dots \varphi^N) \in \mathcal C(\mathbb R_+, \mathbb{R}^{dN}).$ Then
	\begin{align}
	\begin{aligned}
	&v_f[\varphi(t)] \in \mathbb R^d, \qquad 	&|v_f[\varphi(t)]| &\le  |\varphi(t)|  \quad\text{for every } t,\\
	&p_f[\varphi] \in \mathcal C(\mathbb R_+, \mathbb R^{dN}),  \qquad &|p_f[\varphi](t)| &\le |\varphi|_t. 
	\end{aligned}
	\end{align}
	Moreover, the averages satisfy the local Lipschitz conditions:
	\begin{gather}		
	|p_f[\varphi](t) - p_f[\hat \varphi](t)|^2 = \sum_{i=1}^N \left|p_f^i[\varphi](t) - p_f^i[\hat \varphi](t) \right|^2 \le C_1 \|\varphi - \hat \varphi\|_t^2, \\
	\left|v_f[\varphi(t)] - v_f[\hat \varphi(t)] \right|^2 \le C_2 | \varphi(t) - \hat \varphi(t)|^2
	\end{gather}
	for all $t \in [0,\infty)$ with $|\varphi|_t, |\hat \varphi|_t \le n$ with constants
	\begin{align}\label{e:Ci} 
	C_1 &= \Big( 1+ (1 + 2L_f)   \beta n e^{ \beta (\overline{f}- \underline{f})}  \Big), \text{ and } C_2 = \left( 1+ \frac{\alpha n L_f e^{-\alpha \underline{f}}}{N} + n e^{\alpha(\overline{f} - \underline{f})} \left( \frac{1}{N} + \alpha n L_f  \right) \right)^2 2^{N-1}.
	\end{align}
	Here $L_f$ is the Lipschitz constant of $f$ in $B_n = \{ x \colon |x| \le n\}$ and $\underline f, \overline f$ correspond, respectively, to the minimal and maximal values of $f$ on $B_n.$
\end{lemma}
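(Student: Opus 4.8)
The plan is to treat both $v_f$ and $p_f^i$ as \emph{weighted averages} of the particle positions — $v_f[\varphi(t)]$ is a convex combination of $\varphi^1(t),\dots,\varphi^N(t)$, while $p_f^i[\varphi](t)$ is a continuous weighted average of the single trajectory $\varphi^i(\cdot)$ over $[0,t]$ — and to read off every bound from this structure. The norm and sup bounds are then immediate from convexity, the continuity of $p_f$ from continuity of the defining integrals, and the two local Lipschitz estimates from a quotient-difference decomposition together with the Lipschitz continuity of $s\mapsto e^{-\alpha s}$ (resp.\ $e^{-\beta s}$) on the range $[\underline f,\overline f]$ and of $f$ on $B_n$.

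First the elementary bounds. Writing $w_i = e^{-\alpha f(\varphi^i(t))}/\sum_j e^{-\alpha f(\varphi^j(t))}$ one has $w_i\ge0$, $\sum_i w_i=1$, and $v_f[\varphi(t)] = \sum_i w_i\varphi^i(t)$; hence $|v_f[\varphi(t)]|\le\max_i|\varphi^i(t)|\le(\sum_i|\varphi^i(t)|^2)^{1/2}=|\varphi(t)|$. The same convexity argument applied to the probability measure $d\nu_i(s)=e^{-\beta f(\varphi^i(s))}\,ds/\int_0^t e^{-\beta f(\varphi^i(s))}\,ds$ on $[0,t]$ gives $|p_f^i[\varphi](t)|\le\sup_{s\le t}|\varphi^i(s)|\le|\varphi|_t$ for each component, which controls $p_f[\varphi](t)$ by $|\varphi|_t$. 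For continuity I would note that for $t>0$ both integrals defining $p_f^i$ are continuous in $t$ and the denominator is strictly positive (its integrand is bounded below by $e^{-\beta\overline f}>0$), while as $t\to0^+$ the weighted average of $\varphi^i$ over $[0,t]$ tends to $\varphi^i(0)=X_0^i$, matching the value prescribed at $t=0$; thus $p_f[\varphi]\in C(\mathbb R_+,\mathbb R^{dN})$.

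For the Lipschitz estimate of $v_f$ I would set $A(\varphi)=\sum_i\varphi^i(t)e^{-\alpha f(\varphi^i(t))}$ and $B(\varphi)=\sum_i e^{-\alpha f(\varphi^i(t))}$ and use the decomposition
\[
v_f[\varphi(t)]-v_f[\hat\varphi(t)]=\frac{A(\varphi)-A(\hat\varphi)}{B(\varphi)}+A(\hat\varphi)\,\frac{B(\hat\varphi)-B(\varphi)}{B(\varphi)B(\hat\varphi)}.
\]
The denominators are bounded below by $N e^{-\alpha\overline f}$ because $f\le\overline f$ on $B_n$. For the numerators I would split each summand as $\varphi^ie^{-\alpha f(\varphi^i)}-\hat\varphi^ie^{-\alpha f(\hat\varphi^i)}=(\varphi^i-\hat\varphi^i)e^{-\alpha f(\varphi^i)}+\hat\varphi^i(e^{-\alpha f(\varphi^i)}-e^{-\alpha f(\hat\varphi^i)})$ and bound the exponential difference by $\alpha e^{-\alpha\underline f}L_f|\varphi^i-\hat\varphi^i|$ using \ref{a:fLip} and $|\hat\varphi^i|\le n$. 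This produces a bound of the form $|v_f[\varphi(t)]-v_f[\hat\varphi(t)]|\le c\sum_i|\varphi^i(t)-\hat\varphi^i(t)|$ with $c$ built from the quantities $\alpha,n,L_f,N,\underline f,\overline f$ appearing in \eqref{e:Ci}; squaring and applying the elementary inequality $(\sum_{i=1}^N a_i)^2\le2^{N-1}\sum_{i=1}^N a_i^2$ converts the $\ell^1$ sum into $|\varphi(t)-\hat\varphi(t)|^2$ and yields $C_2$. The estimate for $p_f$ is structurally identical with the finite sums replaced by the integrals $A_i(\varphi)=\int_0^t\varphi^i e^{-\beta f(\varphi^i)}\,ds$, $B_i(\varphi)=\int_0^t e^{-\beta f(\varphi^i)}\,ds$, and the per-summand bounds replaced by $|\varphi^i(s)-\hat\varphi^i(s)|\le\|\varphi-\hat\varphi\|_t$ inside the integral.

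The point requiring the most care — and the main obstacle — is the $t$-dependence in the $p_f$ estimate. The lower bound $B_i(\varphi)\ge t\,e^{-\beta\overline f}$ degenerates as $t\to0^+$, so one must check that the factor $t$ appearing in the denominator is exactly matched by the factor $t$ obtained from bounding the integrals $A_i(\varphi)-A_i(\hat\varphi)$ and $B_i(\hat\varphi)-B_i(\varphi)$ by $t$ times a supremum. Once these cancellations are made explicit the resulting Lipschitz constant is uniform on all of $[0,\infty)$, and collecting the contributions of the two terms in the decomposition gives the stated $C_1$. Keeping careful track of which estimates are taken $\ell^2$ in the particle index (for $v_f$, which depends only on $\varphi(t)$) and which must instead be taken in the path-supremum norm $\|\cdot\|_t$ (for $p_f$, which depends on the whole past) is the remaining bookkeeping that determines the precise form of the constants in \eqref{e:Ci}.
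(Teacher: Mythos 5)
Your proposal is correct and follows essentially the same route as the paper's own proof: convexity of the weighted averages for the norm bounds, a quotient-difference decomposition with the summand split $(\varphi^i-\hat\varphi^i)e^{-\alpha f(\varphi^i)}+\hat\varphi^i\bigl(e^{-\alpha f(\varphi^i)}-e^{-\alpha f(\hat\varphi^i)}\bigr)$, the denominator lower bounds $Ne^{-\alpha\overline f}$ and $te^{-\beta\overline f}$ (with the $t$ factors cancelling, as you note), and the $2^{N-1}$ factor to pass from the $\ell^1$ to the $\ell^2$ norm. The only cosmetic difference is at $t\to 0^+$, where the paper invokes l'Hospital's rule while you argue directly that the weighted average over a shrinking interval converges to $\varphi^i(0)$; both are valid.
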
	
The proof of Lemma~\ref{lem:estimatesOnPV} can be found in the Appendix. Using Lemma \ref{lem:estimatesOnPV} we show that the drift and diffusion terms satisfy local Lipschitz and linear growth conditions. These properties allow us to apply the existence and uniqueness result later on.
\begin{lemma}
	Let \ref{a:Hbound}-\ref{a:fLip} hold. Then  
	$$b \colon [0, +\infty) \times  \mathcal C(\mathbb R_+, \mathbb R^{dN} ) \rightarrow \mathbb{R}^{dN} \text{ and } \Sigma\colon [0, +\infty) \times  \mathcal C(\mathbb R_+, \mathbb R^{dN} ) \rightarrow \mathbb{R}^{dN \times dN} $$
	given by
	$$ b(t, \varphi) = -\lambda^\epsilon(t,\varphi)(\varphi(t)-v_f) - \mu^\epsilon(t,\varphi)(\varphi(t) - p_f) $$
	and 
	$$ \Sigma(t, \varphi) = \text{diag}\left( (\varphi^{i}(t) - v_f)_{i=1,\dots,N} \right) \in \mathbb{R}^{dN \times dN}$$
	with $\varphi^i = (\varphi_{(i-1)d+1}, \dots, \varphi_{(i-1)d+d})$
	satisfy the following conditions for all $\varphi, \psi \in C(\mathbb R_+, \mathbb R^{dN})$ and all $R > 0:$ 
	\begin{multicols}{2}
	\begin{enumerate}[nosep, label=(\roman*)]
		\item \label{i:Lipb} $|b(t,\varphi) - b(t, \psi)| \le L_R  \|\varphi - \psi\|_t,$
		\item \label{i:growthb} $|b(t,\varphi)| \le  a \|\varphi\|_t $  ,
		\item \label{i:Lipsig} $|\Sigma(t,\varphi(t)) - \Sigma(t, \psi(t))| \le \ell_R  |\varphi(t)- \psi(t)|,$
		\item \label{i:growthsig} $|\Sigma(t,\varphi(t))| \le  b_R |\varphi(t)|$ ,
	\end{enumerate}
	\end{multicols}
	where $L_R, \ell_R, a_R, b_R \in \mathbb R.$ 
\end{lemma}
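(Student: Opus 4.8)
The plan is to treat the diffusion $\Sigma$ first, since it depends on the state only through the current value $v_f=v_f[\varphi(t)]$, and then the drift $b$, whose two summands each couple a state-dependent diagonal coefficient with a vector factor. For the diffusion I would exploit its block-diagonal form: because $v_f\in\mathbb{R}^d$ is common to all particles, the Frobenius norm used in this paper satisfies $|\Sigma(t,\varphi(t))|^2=\sum_{i=1}^N|\varphi^i(t)-v_f|^2$. For \ref{i:growthsig} I bound each summand by $2|\varphi^i(t)|^2+2|v_f|^2$ and invoke $|v_f[\varphi(t)]|\le|\varphi(t)|$ from Lemma~\ref{lem:estimatesOnPV}, which yields $|\Sigma(t,\varphi(t))|\le\sqrt{2+2N}\,|\varphi(t)|$, so $b_R=\sqrt{2+2N}$ is independent of $R$. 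For \ref{i:Lipsig} I write the $i$-th block difference as $(\varphi^i(t)-\psi^i(t))-(v_f[\varphi(t)]-v_f[\psi(t)])$ and combine the elementary inequality with the Lipschitz estimate $|v_f[\varphi(t)]-v_f[\psi(t)]|^2\le C_2|\varphi(t)-\psi(t)|^2$ of Lemma~\ref{lem:estimatesOnPV} (valid on $\|\varphi\|_t,\|\psi\|_t\le R$, i.e.\ $n=R$); this gives $\ell_R=\sqrt{2+2NC_2}$ with $C_2=C_2(R)$ from \eqref{e:Ci}. Since $\Sigma$ sees only the current value, both estimates live in the pointwise quantity $|\varphi(t)|$.

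For the growth bound \ref{i:growthb} I would use that $\lambda^\epsilon(t,\varphi)$ and $\mu^\epsilon(t,\varphi)$ are diagonal with entries that are products of two factors $H^\epsilon(\cdot)\in[0,1]$ by \ref{a:Hbound}; hence $|\lambda^\epsilon(t,\varphi)w|\le|w|$ componentwise, and likewise for $\mu^\epsilon$. Therefore $|b(t,\varphi)|\le|\varphi(t)-v_f|+|\varphi(t)-p_f|$, and the triangle inequality together with $|v_f|\le|\varphi(t)|$ and $|p_f[\varphi](t)|\le\|\varphi\|_t$ from Lemma~\ref{lem:estimatesOnPV} bounds this by $4\|\varphi\|_t$; thus $a=4$. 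Note that the sup-norm $\|\varphi\|_t$ is forced here by the trajectory dependence of $p_f$.

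The main obstacle is the local Lipschitz estimate \ref{i:Lipb}. For the $\lambda$-summand I would insert the telescoping decomposition
\[
\lambda^\epsilon(t,\varphi)(\varphi(t)-v_f[\varphi])-\lambda^\epsilon(t,\psi)(\psi(t)-v_f[\psi])
=\lambda^\epsilon(t,\varphi)\big[(\varphi(t)-v_f[\varphi])-(\psi(t)-v_f[\psi])\big]+\big[\lambda^\epsilon(t,\varphi)-\lambda^\epsilon(t,\psi)\big](\psi(t)-v_f[\psi]),
\]
and analogously for the $\mu$-summand. The first term is bounded, using $|\lambda^\epsilon w|\le|w|$ and the $v_f$-Lipschitz estimate of Lemma~\ref{lem:estimatesOnPV}, by $(1+\sqrt{C_2})\|\varphi-\psi\|_t$ (and the $\mu$ analogue by $(1+\sqrt{C_1})\|\varphi-\psi\|_t$, the personal best bringing in $\sqrt{C_1}$). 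For the second term I use that the vector factor is controlled, $|\psi(t)-v_f[\psi]|\le2\|\psi\|_t\le2R$, and that the diagonal structure gives $|[\lambda^\epsilon(t,\varphi)-\lambda^\epsilon(t,\psi)]w|\le\max_i|\lambda_i^\epsilon(t,\varphi)-\lambda_i^\epsilon(t,\psi)|\,|w|$.

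The heart of the argument is estimating this maximal entry difference. I would split the product of the two $H^\epsilon$'s by the same add-and-subtract trick, use \ref{a:Hbound} to drop one factor and \ref{a:HLip} to convert the other into $\tfrac{C}{\epsilon}$ times a difference of arguments, and then chain \ref{a:fLip} (quasi-local Lipschitz of $f$ on $B_R$) with the Lipschitz estimates for $v_f$ and $p_f$ of Lemma~\ref{lem:estimatesOnPV}. Concretely, an argument difference such as $\big(f(\varphi^i(t))-f(v_f[\varphi])\big)-\big(f(\psi^i(t))-f(v_f[\psi])\big)$ is controlled by $L_f(1+\sqrt{C_2})\,|\varphi(t)-\psi(t)|$, while the personal-best argument brings in $L_f\sqrt{C_1}\,\|\varphi-\psi\|_t$. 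Collecting terms gives a bound of the form $\tfrac{C}{\epsilon}K(R,L_f,C_1,C_2)\,\|\varphi-\psi\|_t$, and after multiplying by $2R$ and adding the first term the $\lambda$-summand is Lipschitz with a constant proportional to $1/\epsilon$; the $\mu$-summand is identical with the roles of $v_f$ and $p_f$ interchanged in the Heaviside arguments. Summing yields \ref{i:Lipb} with $L_R=L_R(\epsilon,R)$. The subtle points I expect to be most delicate are checking that all evaluation points $\varphi^i(t),\,v_f,\,p_f^i$ indeed lie in $B_R$ so that \ref{a:fLip} and the constants $C_1,C_2$ of Lemma~\ref{lem:estimatesOnPV} apply, and tracking that the regularization parameter $\epsilon$ enters only the Lipschitz constants and not the growth constants.
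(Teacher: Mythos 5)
Your proposal is correct and follows essentially the same route as the paper's proof: the same add-and-subtract decomposition of the drift difference into a coefficient-difference term and a vector-difference term, handled respectively via \ref{a:HLip} combined with \ref{a:fLip} and via \ref{a:Hbound}, together with the $v_f$/$p_f$ estimates of Lemma~\ref{lem:estimatesOnPV}, and the same diagonal-structure argument for $\Sigma$. The only slip is quantitative and immaterial to the statement: the stacked vector $(v_f,\dots,v_f)\in\mathbb{R}^{dN}$ has norm $\sqrt{N}\,|v_f|$, so your growth constant $a=4$ should carry an $N$-dependence (roughly $3+\sqrt{N}$), just as the paper's implicit constants do.
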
	
\begin{proof}
	To show \ref{i:Lipb} we calculate
	\begin{equation}
	|b^i(t,\varphi) - b^i(t,\psi)|^2  \le 2(I_1 + I_2 + I_3 + I_4),
	\end{equation}
	where
	\begin{align*}
	I_1 &:= \left| (\lambda^{\epsilon, i}(t,\varphi) - \lambda^{\epsilon, i}((t,\psi) ) (\varphi^i(t) - v_f[\varphi(t)]) \right|^2 \\  &\le \frac{1}{2\epsilon} L_f( |\varphi^i(t)| + |v_f[\varphi(t)]|) \left( 2|\varphi^i(t) - \psi^i(t)|^2 + 8|v_f[\psi(t)] - v_f[\varphi(t)] |^2 + 4| p_f^i[\varphi](t) - p_f^i[\psi](t) |^2 \right), \\
	I_2 &:= \left| \lambda^{\epsilon, i}((t,\psi) (|\varphi^i(t) - \psi^i(t)| + |v_f[\psi(t)] - v_f[\varphi(t)]|) \right|^2 \le 2|\varphi^i(t) - \psi^i(t)|^2 + 2|v_f[\psi(t)] - v_f[\varphi(t)|^2, \\
	I_3 &:= \left| (\mu^{\epsilon, i}((t,\varphi) - \mu^{\epsilon, i}((t,\psi) ) (\varphi^i(t) - p^i_f[\varphi](t)) \right|^2 \\ &\le \frac{1}{2\epsilon}L_f (|\varphi(t) - p_f^i[\varphi](t)|) \left( 2|\varphi^i(t) - \psi^i(t)|^2 + 4|v_f[\psi(t)] - v_f[\varphi(t)] |^2 + 8| p_f^i[\varphi](t) - p_f^i[\psi](t) |^2 \right) , \\
	I_4 &:= \left| \mu^{\epsilon, i}((t,\psi) (\varphi^i(t) - \psi^i(t) + p^i_f[\psi](t) - p^i_f[\varphi](t)) \right|^2 \le 2|\varphi^i(t) - \psi^i(t)|^2 + 2|p_f^i[\varphi](t) - p_f^i[\psi](t)|^2.
	\end{align*}
From Lemma \ref{lem:estimatesOnPV} we know that $|v_f[\psi(t)] - v_f[\varphi(t)] |^2 \le C_1 |\varphi(t) - \psi(t)|^2,$ and 
 $|p_f^i[\varphi](t) - p_f^i[\psi](t)|^2 \le C_2 \|\varphi^i - \psi^i\|_t^2$
 	with constants $C_1$ and $C_2$ given by \eqref{e:Ci} with $n = R$.
 	This yields \ref{i:Lipb} since
 	\begin{align*}
 	|b(t,\varphi) - b(t, \psi)| = \left( \sum_{i=1}^N |b^i(t,\varphi) - b^i(t,\psi) | \right)^{1/2} \le L_R \|\varphi - \psi\|_t.
 	\end{align*}
 	The Lipschitz bound \ref{i:Lipsig} follows from similar arguments using the diagonal structure of $\Sigma$:
 	\begin{align*}
 	|\Sigma(t,\varphi(t))- \Sigma(t,\psi(t))| &= \left( \sum_{i=1}^N |\Sigma_{ii}(t,\varphi(t))- \Sigma_{ii}(t,\psi(t))|^2\right)^{1/2} \\ &\le \left( \sum_{i=1}^N 2|\varphi^i(t)- \psi^i(t)|^2 + 2|v_f[\varphi(t)]- v_f[\psi(t)]|^2 \right)^{1/2} \le \ell_R |\varphi(t)- \psi(t)|.
 	\end{align*}
 	The last two inequalities hold due to
 	\begin{align*}
 		|b(t,\varphi)| &= \left( \sum_{i=1}^N b^i(t,\varphi)^2 \right)^{1/2} \le \left(\sum_{i=1}^N 8|\varphi^i(t)|^2 + 4|v_f[\varphi(t)]|^2 + 2|p_f^i[\varphi](t)|^2 \right)^{1/2} \le a_R \| \varphi \|_t, \\
 		|\Sigma(t,\varphi)|	&= \left( \sum_{i=1}^N \Sigma_{ii}(t,\varphi)^2 \right)^{1/2} \le \left( \sum_{i=1}^N 2 \varphi^i(t)^2 + 2 |v_f[\varphi(t)]|^2 \right)^{1/2} \le b_R |\varphi(t)|.
 	\end{align*}
\end{proof}
Equipped with this lemma, we have everything at hand to prove the main theorem.
\begin{theorem} Let \ref{a:Hbound}-\ref{a:fLip} be satisfied and $\xi \in L^0(\Omega, \mathcal F_0, \mathbb P, \mathbb R^{dN})$ with $\mathbb E|\xi|^p < \infty$ for each $p>0.$ Then, there exists a unique strong global solution to \eqref{eq:regularisedPSO}. Moreover, there exists a constant $C_{p,T,L_r, \ell_R}$ such that
\[
\mathbb E \sup\limits_{t \in [0,T]} |X(t)|^p \le C_{p,T,L_r, \ell_R}\mathbb E |\xi|^p. 
\]
\end{theorem}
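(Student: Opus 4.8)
The plan is to combine a localization (truncation) argument with the existence theory for functional SDEs with globally Lipschitz coefficients, and then to rule out explosion by means of a priori moment bounds. The foregoing lemma shows that $b$ and $\Sigma$ satisfy local Lipschitz conditions with constants $L_R, \ell_R$ depending only on the truncation radius $R$, together with linear growth bounds whose constants can be taken independent of $R$ (they descend from the uniform estimates $|v_f[\varphi(t)]| \le |\varphi(t)|$ and $|p_f[\varphi](t)| \le \|\varphi\|_t$ of Lemma~\ref{lem:estimatesOnPV}). First I would fix $R > 0$ and replace $b, \Sigma$ by truncated coefficients $b_R, \Sigma_R$ obtained by multiplying with a Lipschitz cutoff of the path sup-norm $\|\varphi\|_t$, chosen so that $b_R, \Sigma_R$ coincide with $b, \Sigma$ on $\{\|\varphi\|_t \le R\}$ and are bounded and globally Lipschitz on all of $C(\mathbb R_+, \mathbb R^{dN})$, with constants controlled by $L_R, \ell_R$. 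The truncated system then falls into the globally Lipschitz functional SDE framework (Theorem~3.17 in \cite{Pardoux}), which yields a unique strong solution $X^R$ on $[0,T]$ for every $T > 0$.

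Next I would patch the truncated solutions, in the spirit of the localization argument of Theorem~3.27 in \cite{Pardoux}. Defining the stopping times $\tau_R = \inf\{ t \ge 0 : |X^R(t)| \ge R \}$, pathwise uniqueness for each truncated equation forces the family $\{X^R\}_R$ to be consistent, i.e. $X^{R'} = X^R$ on $[0, \tau_R]$ for $R' > R$, since the corresponding coefficients agree before the exit time. This defines a process $X$ on the stochastic interval $[0, \tau)$ with $\tau := \lim_{R \to \infty} \tau_R$; it is the unique maximal strong solution and it solves the untruncated equation \eqref{eq:regularisedPSO} up to $\tau$.

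It remains to show $\tau = +\infty$ almost surely and to establish the moment bound, and this is where I expect the main obstacle to lie: the drift is controlled in the path sup-norm $\|\cdot\|_t$ while the diffusion is controlled only through the current value $|\cdot|$, so the two contributions must be treated asymmetrically inside a single Gronwall estimate. For $p \ge 2$ I would apply Itô's formula to $|X(t \wedge \tau_R)|^p$, bound the drift contribution via the linear growth $|b(t,\varphi)| \le a\|\varphi\|_t$ and the diffusion contribution via $|\Sigma(t,\varphi(t))| \le b_R|\varphi(t)|$, take the supremum over $[0,T]$, and estimate the resulting stochastic integral by the Burkholder--Davis--Gundy inequality. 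After absorbing the martingale term into the left-hand side, the quantity $\mathbb E \sup_{s \le t \wedge \tau_R} |X(s)|^p$ is bounded by $\mathbb E|\xi|^p$ plus a time integral of itself, so that Gronwall's inequality gives
\[
\mathbb E \sup_{t \in [0,T]} |X(t \wedge \tau_R)|^p \le C_{p,T,L_R,\ell_R}\, \mathbb E|\xi|^p,
\]
with a constant independent of $R$ thanks to the $R$-independence of the growth constants.

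Letting $R \to \infty$ and invoking Fatou's lemma then shows simultaneously that $\mathbb P(\tau \le T) = 0$ for every $T$, hence $\tau = +\infty$, and that the stated estimate holds for the global solution. The case $0 < p < 2$ follows from the same localization together with the moment estimates for functional SDEs in \cite{Pardoux} (equivalently, since all moments of $\xi$ are assumed finite, by a Hölder/interpolation argument from the cases $p \ge 2$). Uniqueness of the global solution is inherited from the uniqueness of each $X^R$ together with the consistency established above, which completes the proof.
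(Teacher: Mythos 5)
Your proposal is correct in substance and shares the paper's high-level strategy---reduce to the globally Lipschitz, path-dependent setting of \cite{Pardoux} and remove the localization afterwards---but the execution is genuinely different. The paper localizes through stopping times $\tau_n$ and re-proves existence by hand, via a contraction mapping for $\Gamma(U)_t = \xi + \int_0^t b(s,U)\,ds + \int_0^t \Sigma(s,U_s)\,dB_s$ on successive small subintervals of $[0,\tau_n]$, with uniqueness obtained from the weighted estimate of Theorem~3.8 in \cite{Pardoux}; you instead truncate the coefficients with a cutoff in $\|\cdot\|_t$, invoke the globally Lipschitz functional-SDE theorem (Theorem~3.17 in \cite{Pardoux}) as a black box for each $X^R$, and patch the solutions together by pathwise uniqueness. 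Both are legitimate, but your route buys something important: the paper's existence step simply asserts that ``$\tau_n \rightarrow \infty$ almost surely'' (this is automatic in its uniqueness step, where $\tau_n$ is built from two \emph{given} global solutions, but not in the existence step, where no global solution exists yet), and the paper never actually derives the moment bound stated in the theorem; your It\^{o}/Burkholder--Davis--Gundy/Gronwall argument with growth constants independent of $R$ (that independence is indeed correct, descending from $|v_f[\varphi(t)]|\le|\varphi(t)|$, $|p_f[\varphi](t)|\le\|\varphi\|_t$ in Lemma~\ref{lem:estimatesOnPV} and the bound \ref{a:Hbound} on the regularized Heaviside functions) supplies exactly the missing non-explosion proof and the estimate $\mathbb E\sup_{t\in[0,T]}|X(t)|^p \le C\,\mathbb E|\xi|^p$ in one stroke. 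One caveat: for $0<p<2$ your H\"older/interpolation fallback only yields a bound by $\left(\mathbb E|\xi|^2\right)^{p/2}$, which by Jensen's inequality is weaker than the claimed bound by $\mathbb E|\xi|^p$; for those exponents you should rely on the small-exponent estimates for path-dependent SDEs in \cite{Pardoux} (as you also suggest), not on interpolation.
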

The proof combines arguments of Theorem 3.17 (path-dependent SDE) and Theorem 3.27 (SDE with local Lipschitz coefficients) in \cite{Pardoux}. 
\begin{proof}
	We start by proving uniqueness. Let $X, \hat X \in S_{dN}^0$ be two solutions to \eqref{eq:regularisedPSO} corresponding to initial data 
	$\xi, \hat \xi \in L^0(\Omega, \mathcal F_0, \mathbb P, \mathbb R^{dN}),$ respectively. Then it holds $\mathbb E\,\xi = \mathbb E\,\hat \xi$. Define the stopping time
	$\tau_n(\omega) = \inf \{ t \ge 0 \colon |X_t(\omega)| + |\hat X_t(\omega)| \ge n\}.$ Then the two solutions satisfy
	\begin{align*}
		X_{t \wedge \tau_n} &= \xi + \int_0^t 1_{[0,\tau_n]}(s) b(s \wedge \tau_n, X) ds + \int_0^t 1_{[0,\tau_n]}(s) \sigma(s \wedge \tau_n, X_{s \wedge \tau_n}) dB_s, \\
		\hat X_{t \wedge \tau_n} &= \hat \xi + \int_0^t 1_{[0,\tau_n]}(s) b(s \wedge \tau_n, \hat X) ds + \int_0^t 1_{[0,\tau_n]}(s) \sigma(s \wedge \tau_n, \hat X_{s \wedge \tau_n}) dB_s,
	\end{align*} 
	and $\tau_n \rightarrow \infty$ for $n \rightarrow \infty.$
	Note that we have global Lipschitz constants for $b$ and $\sigma$ for all times $t \in [0, \tau_n]$ with $n$ arbitrary but fixed. This allows us to use Theorem 3.8 in \cite{Pardoux}, see proof of Theorem 3.27 in \cite{Pardoux} for more details, to obtain 
	\[
	\mathbb E \frac{\| e^{-V^R} (X_{\cdot \wedge \tau_n} - \hat X_{\cdot \wedge \tau_n}) \|_{T}^p}{\left( 1 + \| e^{-V^R} (X_{\cdot \wedge \tau_n} - \hat X_{\cdot \wedge \tau_n}) \|_{T}^2 \right)^{p/2}} \le C_{p} \mathbb E \frac{|X_0 - \hat X_0|^p}{\left(1 + |X_0 - \hat X_0|^2 \right)^{p/2}},
	\] for some $V^R$ depending on the local Lipschitz constants $L_R, \ell_R, a_R, b_R$ and $p \ge 2$ arbitrary. Hence, the uniqueness of the solution on $[0,\tau_n].$ As $\tau_n \rightarrow \infty$ for $n \rightarrow \infty,$ this allows us to conclude the global uniqueness of $X \in S_{dN}^0.$ 
	
	Next, we show the existence of solutions. Let $M \in \mathbb N^*$ and $0 = T_0 < T_1 < \dots < T_M = \tau_n$ with $T_i = \frac{i \tau_n}{M}$.  It holds
	\[
	\alpha(\frac{\tau_n}{M}) := \sup\limits_{0 < s-t < \frac{\tau_n}{M}} \left( \int_t ^s L_R \; dr \right)^p + \left( \int_t^s \ell_R^2\, dr \right)^{p/2} \longrightarrow 0 \quad \text{ as } M \rightarrow \infty.
	\]
	We employ a fixed point argument for the mapping $\Gamma \colon S_{dN}^p [0,T_1] \rightarrow S_{dN}^p[0, T_1]$ given by
	\[
	\Gamma(U)_t = \xi + \int_0^t b(s, U) ds + \int_0^t \sigma(s, U_s) dB_s,
	\]
	where we need no stopping times due to $t < \tau_n$ on $[0,T_1].$ Indeed, the mapping $\Gamma$ is well-defined since for all $\varphi \in C(\mathbb R_+, \mathbb R^{dN})$ 
	\[
	|b(t,\varphi) | \le L_R \|\varphi\|_t, \textrm{ and } |\sigma(t,\varphi) | \le \ell_R \|\varphi\|_t,
	\]
	is satisfied. Because of the Lipschitz continuity, both stochastic processes $b(\cdot,U)$ and $\sigma(\cdot, U_s)$ are progressively measurable for all $U \in S_{dN}^p[0,\tau_n]$ and $b(\cdot, U) \in L^p(\Omega, L^1(0,\tau_n))$ and $\sigma(\cdot, U) \in \Lambda^p_{dN \times dN}(0, \tau_n).$ Therefore,
	\[
	\int_0^\bullet b(r, U) dr\, ,\int_0^\bullet \sigma(r, U_r) dB_r  \; \in S_{dN}^p[0,\tau_n]. 
	\]
	We will show that the operator $\Gamma$ is a strict contraction on the complete metric space $S_{dN}^p[0,T_1]$ for sufficiently large $M$ (where $S_{dN}^p[0,T_1]$ is
	equipped with the usual distance
	$d_{p,M}(U,V) = (\mathbb E\| U -V\|_{T_1}^p)^{1/{p \vee 1}}$).
	Let $U,V \in S_{dN}^p[0,T_1].$ By the Burkholder-Davis-Gundy inequality we have
	\begin{align*}
	\mathbb E \| \Gamma(U) - \Gamma(V) \|_{T_1}^p &\le (1 \vee 2^{p-1})\, \mathbb E \sup\limits_{s \in [T_0,T_1]} |\int_{T_0}^s b(r, U) - b(r,V) dr|^p \\
	 & \qquad + (1 \vee 2^{p-1})\, \mathbb E \sup\limits_{s \in [T_0,T_1]} |\int_{T_0}^s \sigma(r, U_r) \sigma(r,V_r) dB_r|^p \\
	 & \le  (1 \vee 2^{p-1})\, \left[ \mathbb E \left( \int_{T_0}^{T_1} L_R \| U -V \|_r dr \right)^p + \mathbb E \left( \int_{T_0}^{T_1} \ell_R^2 | U_r - V_r | dr  \right)^{p/2} \right] \\
	 & \le  (1 \vee 2^{p-1})\, \alpha (\frac{\tau_n}{M})\, \mathbb E(\| U -V \|_{T_1}^p).
	\end{align*}
	Let $M_0 \in \mathbb N^*$ such that $ (1 \vee 2^{p-1}) \alpha(\frac{\tau_n}{M_0}) \le \left( \frac{1}{2} \right)^{1 \vee p}$. Then $\Gamma$ is a strict contraction in $S_{dN}^p[0,T_1]$ and thus \eqref{eq:regularisedPSO} has a unique solution $X \in S_{dN}^p[0,T_1].$
	We extend the solution to the interval $[0,T_2]$ by defining a mapping, again, called $\Gamma \colon S_{dN}^p[0,T_2] \rightarrow S_{dN}^p[0,T_2]$:
	\[
	\Gamma(U)_t = \begin{cases} X_t, &\text{if } t \in [0,T_1], \\
	X_{T_1} + \int_{T_1}^t b(s, U) ds + \int_{T_1}^t \sigma(s,U_s) ds, & \text{if } t \in (T_1, T_2]. \end{cases}
	\]
	We repeat the argument $M_0$ times to be valid the whole interval $[0, \tau_n]$. Since $\tau_n \rightarrow \infty$ almost surely, the uniqueness of the solution implies that 
	\[
	[X_t^{n+1}(\omega) - X_t^n(\omega)]1_{[0,\tau_n(\omega)]}(t) 1_{[0,\infty)}(\tau_n(\omega)) = 0.
	\]
	Hence, the process $X \in S_d^0$ is defined by $X_t(\omega) = X_t^n(\omega)$ if $0 \le t \le \tau_n(\omega)$ and $\tau_n(\omega) > 0$ is the unique solution to the regularized problem \eqref{eq:regularisedPSO}.
\end{proof}

\section{Numerical results}\label{s:numerics}
The numerical simulations are based on the direct simulation of system \eqref{eq:particlePSO} using the Euler-Maruyama scheme, in which we do not
approximate the Heaviside function $H$. Note that the smoothing of $H$ was only needed for analytic considerations. In practise, we want only
one of the drift terms to effect the particles dynamics, which is why we have not pursued this option any further. The final time is set to $T = 15$, discretized into $3 \times 10^4$ time steps. All 
presented results are averaged over $M = 5000$ realizations. The standard deviation is set to
$\sigma = 0.5$, while the number of agents depends on the dimension of the function space. In particular, we set the number of agents to $3,5$ or $10$ times the space
dimension. The initial positions of particles are drawn from a uniform distribution within a specific domain for each function. The parameters $\alpha$ and $\beta$ to compute the
global and personal best are set to
\begin{align*}
\alpha = 10 \text{ and } \beta = 10.
\end{align*}
A realization is successful if the average mean is close to the function minimum $f_{\min}$, in particular
\begin{align*}
 \lvert f(v_f(T)) - f(x_{\min})\rvert < 0.1.
\end{align*}
We compare the performance of the CBO scheme with $\mu=0$, PB and wPB for the following benchmark problems:
\begin{enumerate}[nosep]
\item \textit{Alpine \cite{Benchmark}:} This non-convex differentiable function %is evaluated on the domain $?? $ and 
has a global minimum at $x_{\min} = (0,\ldots, 0)$
\begin{align}\label{e:alpine}
f(x) = \sum_{i=1}^d \lvert x_i \sin(x_i) + 0.1 x_i\rvert.
\end{align}
\item \textit{Ackley \cite{Ackely}: } This function is continuous, non-differentiable and non-convex and has its global minimum at $x_{\min} = (0, \ldots ,0)$. 
\begin{align}\label{e:ackley}
f(x) = -20\exp(0.2\sqrt{\frac{1}{d}|x|^2}) - \exp(\frac{1}{d}\sum_{i=1}^d \cos(2\pi x_i)) + 20 + \exp(1).
\end{align}
\item \textit{Rastrigin \cite{Rastrigin}:} The Rastrigin function is continuous, differentiable and convex, has lots of local minima and a global minimum at $x_{\min} = (0,\ldots 0)$.
\begin{align}\label{e:rastrigin}
 f(x) = 10d + \sum_{i=1}^d (x_i^2 - 10 \cos(2 \pi x_i)).
\end{align}
\item \textit{Xinsheyang2: \cite{Benchmark} } This function is continuous but not differentiable and non-convex with a global minimum at $x_{\min} = (0, \ldots 0)$.
\begin{align}\label{e:xinsheyang}
f(x) = \sum_{i=1}^d \lvert x_i \rvert \exp(-\sum_{i=1}^d \sin(x_i^2)).
\end{align}
\end{enumerate}
The choice of these functions is based on the different characteristics they have, see Figure~\ref{fig:benchs} for plots in 2D.
\begin{figure}[htp]
	\centering
	\includegraphics[scale=0.2]{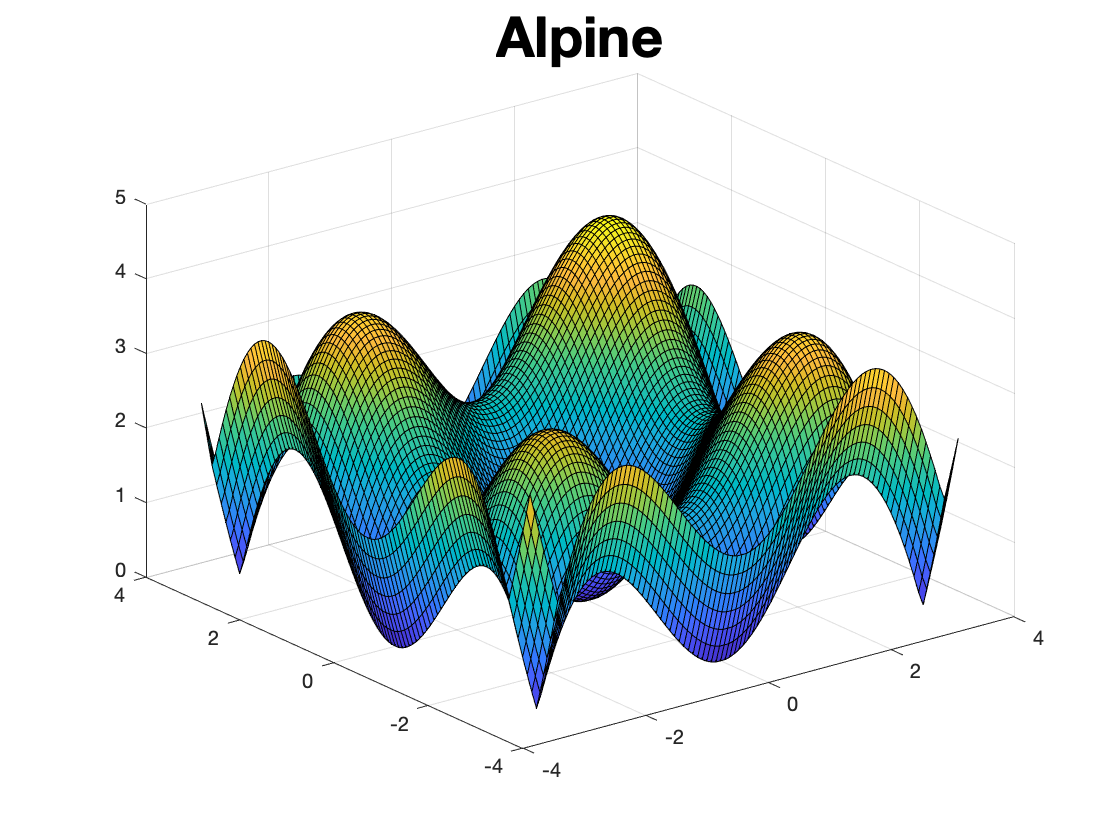}
	\includegraphics[scale=0.2]{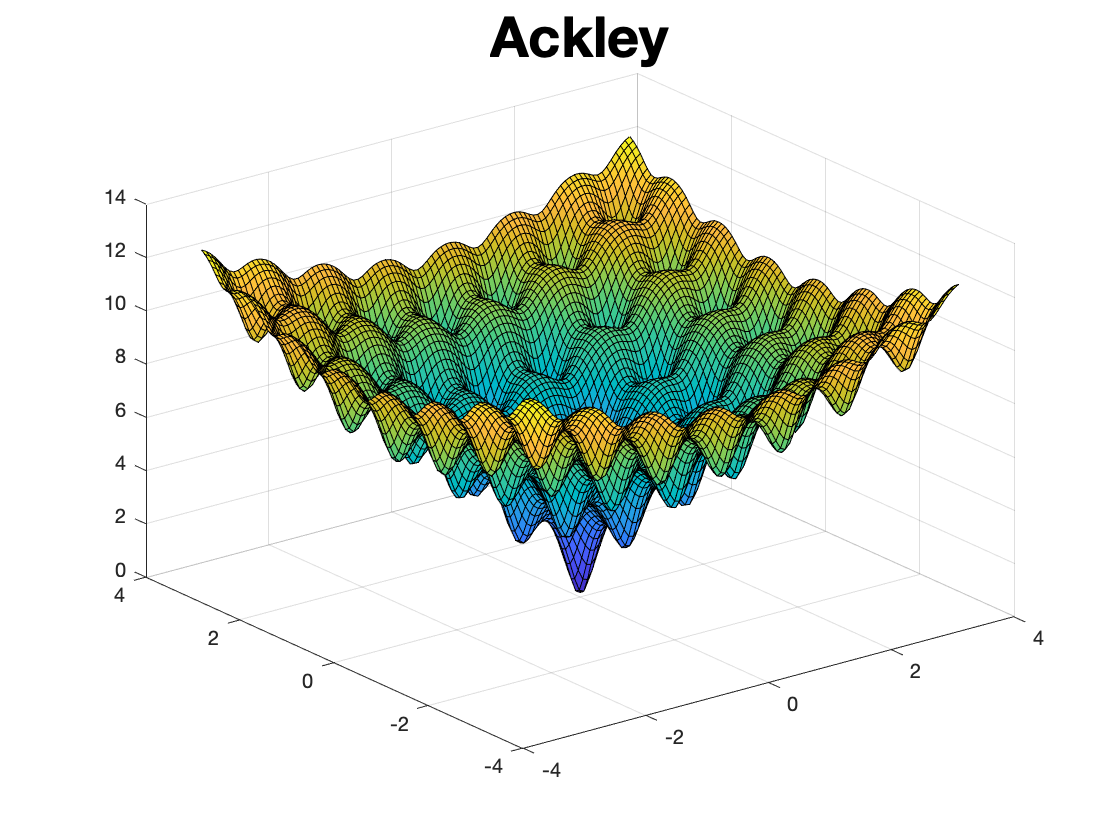} \\
	\includegraphics[scale=0.2]{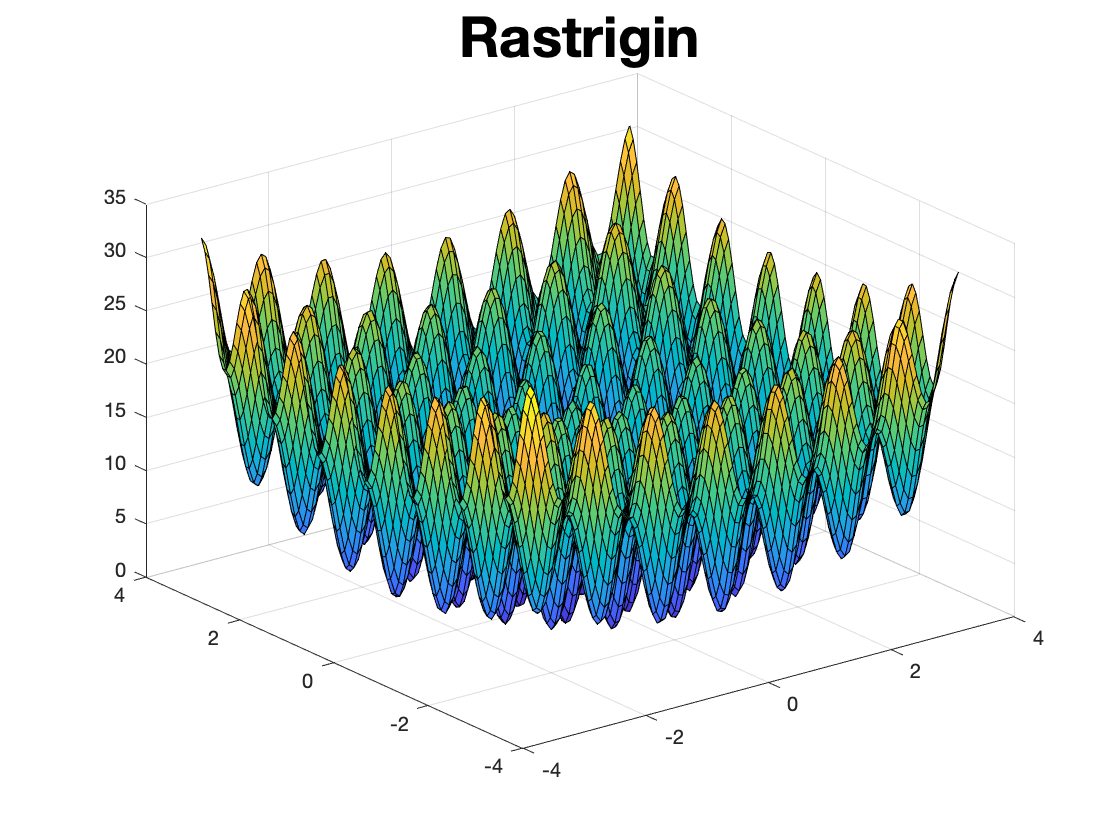}
	\includegraphics[scale=0.2]{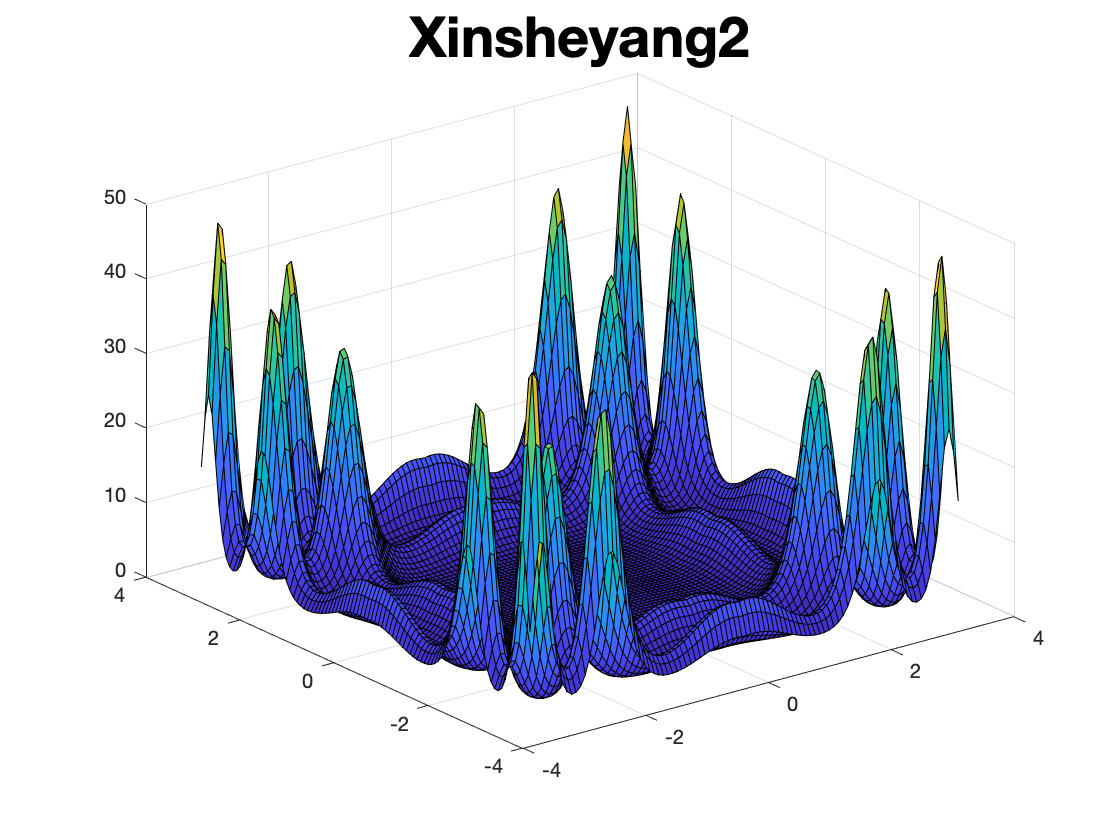}
	\caption{2D plots of the benchmark functions.}
	\label{fig:benchs}
\end{figure}	
Table \ref{t:alpineackley} shows the results for the Alpine function \eqref{e:alpine} and the Ackley function \eqref{e:ackley}. 
We observe that the success rate increases with the number of particles,
and decreases for higher space dimension. Weighted personal best and personal best give comparable results, which is not surprising 
since wPB approximates PB for large values of $\beta$. 
A similar behavior can be seen in the case of the  Rastrigin function \eqref{e:rastrigin} and the Xinsheyang function \eqref{e:xinsheyang} in Table \ref{t:rastxinshe}.
\csvstyle{myTableStyle}{tabular=|c|c|c|c|c|,
table head=\hline\textbf{d} & \textbf{\# par.} & \textbf{CBO} & \textbf{PB} & \textbf{wPB}\\\hline\hline,
late after line=\\\hline,head to column names}
\begin{table}[htp]
\centering
\begin{minipage}{0.45\textwidth}
\csvreader[myTableStyle]{alpine.csv}{}%
{\Dim&\Nparticles~& \CBO &\PB & \WPB}
\end{minipage}
\begin{minipage}{0.45\textwidth}
\csvreader[myTableStyle]{ackley.csv}{}%
{\Dim&\Nparticles~& \CBO &\PB & \WPB}
\end{minipage}
\caption{Success rates of consensus based optimization (CBO), personal best (PB) and weighted personal best (wPB) scheme for Alpine \eqref{e:alpine} and Ackley \eqref{e:ackley} in space dimension $d$ for different $\#$ of particles.}
\label{t:alpineackley}
\end{table}

\begin{table}[htp]
\centering
\begin{minipage}{0.45\textwidth}
\csvreader[myTableStyle]{rastrigin.csv}{}%
{\Dim&\Nparticles~& \CBO &\PB & \WPB}
\end{minipage}
\begin{minipage}{0.45\textwidth}
\csvreader[myTableStyle]{xinsheyang2.csv}{}%
{\Dim&\Nparticles~& \CBO &\PB & \WPB}
\end{minipage}
\caption{Success rates of consensus based optimization (CBO), personal best (PB) and weighted personal best (wPB) scheme for Rastrigin \eqref{e:rastrigin} and Xinsheyang2 \eqref{e:xinsheyang} in space dimension $d$ for different $\#$ of particles.}
\label{t:rastxinshe}
\end{table}
We conclude by investigating a 2D version of the toy problem considered in Section \ref{s:toy}:
\begin{align*}
 f(x_1, x_2) = (x_1^2 - 1)^2  + 0.01 x_1 + 0.5 + x_2^2.
\end{align*}
This function has a global minimum at $x_{\min} = (-1.00125, 0)$ and a local minimum at $(0.998748,0)$.
We wish to explore the dynamics of this 2D version for different number of particles. In doing so we consider $4$, $8$ or $16$ particles and start the particle schemes with
$2$, $4$ and $8$ placed in each of the two wells with a random perturbation. Furthermore we set $\alpha=10$ and $\beta = 20$. Table \ref{t:toy2d} shows the results as the number of particles increases. We observe that CBO and (w)PB
perform equally well for large numbers of particles, and that (w)PB outperform CBO for few particles.
This could be explained by the fact that the probability of all particles deviating from the global minimum decreases as their number increases.

\begin{table}[htp]
\centering
\begin{minipage}{0.45\textwidth}
\csvreader[myTableStyle]{toy2d.csv}{}%
{\Dim&\Nparticles~& \CBO &\PB & \WPB}
\end{minipage}
\caption{Success rates of consensus based optimization (CBO), personal best (PB) and weighted personal best (wPB) scheme for the 2D tow problem in space dimension $2$ for different $\#$ of particles.}
\label{t:toy2d}
\end{table}

\section{Conclusion} In this paper we introduced a consensus based global optimization scheme, which includes the personal best information of each particle. The proposed 
generalization is motivated by the original works on particle swarm algorithms, in which particles adjust their position as a linear combination of moving towards the current global
best and  their personal best value. 

We discussed how information about the personal best can be included in consensus based optimization schemes, leading to a system of functional stochastic differential 
equations. A well-posedness result for the respective regularized non-Markovian SDEs was presented. New features of the algorithm with personal best were illustrated and compared in computational experiments. The numerical results indicate that information about the personal best leads to higher success rates in the case of few particles and that the corresponding weighted means are better approximations of the global function minima.\\

\section*{Acknowledgments}
The authors would like to thank Christoph Belak (TU Berlin), Stefan Grosskinsky (University of Warwick), Greg Pavliotis (Imperial College London) and Oliver Tse (TU Eindhoven) for the helpful discussions and constructive input.
CT was partly supported by the European Social Fund and by the Ministry Of Science, Research and the Arts Baden-W\"urttemberg. MTW was partly supported by the New Frontier's grant NST-0001 of the Austrian Academy of Sciences \"OAW.

\section*{Appendix}
\begin{proof}{[Theorem \ref{t:wellposed}]}
	In the following we denote in abuse of notation by $v_f[X]$ the vector $(v_f, \dots, v_f) \in \mathbb R^{dN}.$ We rewrite \eqref{eq:particlePSO} with $\lambda(X^i(t),v_f,p_f^i) \equiv \lambda, \mu(X^i(t),v_f,p_f^i) \equiv 0$ as 
	\[
	dX(t) = -\lambda (X(t) - v_f[X(t)]) dt + \sqrt{2}\sigma \text{diag}(X(t) - v_f[X(t)]) dB_t  
	\]
	with $\text{diag}(X(t) - v_f[X(t)]) \in \mathbb R^{dN \times dN}$ being the diagonal matrix with $d_k = diag(X^k(t) - v_f) \in \mathbb R^{d \times d} $ for $k=1, \dots, N$ and $dB_t$ a $dN$-dimensional Brownian motion. 
	The argument follows the lines of the well-posedness in \cite{CBO2}. In fact, let $M[X(t)] = \text{diag}(X(t) - v_f[X(t)])$ and $n\in \mathbb N$ arbitrary. We have to check that there exists a constant $C_n$ such that
	\begin{equation}\label{eq:inequ}
	-2\lambda X(t) \cdot (X(t) - v_f[X(t)]) + 2\sigma^2 \text{trace}(M[X(t)] M[X(t)]^T) \le C_n |X(t)|^2,
	\end{equation}
	for every $|X(t)| \le n.$ Note that $f(X(t))$ is bounded for $|X(t)| \le n$ due to its local Lipschitz continuity. Hence,
	the estimate for the first term on the left-hand side is identical to the one in \cite{CBO2}. Indeed, we have
	\[
	-2\lambda X(t) \cdot (X(t) - v_f[X(t)]) \le 2\lambda \sqrt{N} |X(t)|^2.
	\]
	For the component-wise drift we obtain
	\[
	2\sigma^2\text{trace}\left(M [X(t)] M[X(t)]^T\right)  = 2\sigma^2\sum_{j=1}^N \sum_{k=1}^d [(X^{j}(t) - v_f)_k]^2 \le 4\sigma^2 (1 + N) |X(t)|^2.
	\]
	Combining the two preceding estimates we obtain Eq. \eqref{eq:inequ}. Now, employing \cite[Ch 5.3,Thm 3.2]{Durrett} yields the desired result.
\end{proof}

\begin{ }{[Lemma \ref{lem:estimatesOnPV}]} We start by showing continuity. For $p_f[\varphi]$ we need to check continuity as $t \rightarrow 0$. In fact, l'Hospital's rule yields
	\[
	\lim\limits_{t\rightarrow 0} \frac{\int_0^t \varphi^i(s) e^{-\beta f(\varphi^i(s))} ds}{\int_0^t e^{-\beta f(\varphi^i(s))} ds} = \lim\limits_{t\rightarrow 0} \frac{\varphi^i(t) e^{-\beta f(\varphi^i(t))}}{e^{-\beta f(\varphi^i(t))} } =  \lim\limits_{t\rightarrow 0} \varphi^i(t) = \varphi_0^i.
	\]
	This directly implies the continuity of the vector $p_f[\varphi].$ Moreover, it is easy to see that
	\begin{align*}
	|v_f[\varphi(t)]|^2 &= \left|  \frac{\sum_{i=1}^N \varphi^i(t) e^{-\alpha f(\varphi^i(t))}}{\sum_{i=1}^N e^{-\alpha f(\varphi^i(t))}} \right|^2 \le |\varphi(t)|^2, \\ 
	|p_f[\varphi](t)|^2 &= \sum_{i=1}^N |p_f^i[\varphi](t)|^2 =  \sum_{i=1}^N \left| \frac{\int_0^t \varphi^i(s) e^{-\beta f(\varphi^i(s))} ds}{\int_0^t e^{-\beta f(\varphi^i(s))} ds} \right|^2 \le \|\varphi\|_t^2.
	\end{align*}
	We are left to show the local Lipschitz continuity. Therefore, we estimate
	\begin{align*}
	\left|p_f^i[\varphi](t) - p_f^i[\hat \varphi](t) \right| &\le \left| \frac{\int_0^t (\varphi^i(s) - \hat \varphi^i(s)) e^{-\beta f(\varphi^i(s)) } ds}{\int_0^t e^{-\beta f(\varphi^i(s))} ds} \right| + \left| \frac{\int_0^t \hat \varphi^i(s) \left( e^{-\beta f(\varphi^i(s))} - e^{-\beta f(\hat \varphi^i(s))} \right) ds}{\int_0^t e^{-\beta f(\varphi^i(s))} ds}\right| \\ &\quad + \left| \frac{\int_0^t \hat \varphi^i(s) e^{-\beta f(\hat \varphi^i(s))} ds \left( \int_0^t \hat \varphi^i(s) e^{-\beta f(\hat \varphi^i(s))} ds - \int_0^t \varphi^i(s) e^{-\beta f(\varphi^i(s))} ds \right)}{\int_0^t e^{-\beta f(\varphi^i(s))} ds \, \int_0^t e^{-\beta f(\hat \varphi^i(s))} ds} \right| \\
	&=: I_1 + I_2 + I_3,
	\end{align*}
	to obtain
	\begin{align*}
	I_1 &\le \|\varphi^i - \hat \varphi^i\|_t, ~~ I_2 \le \beta e^{ \beta (\overline{f}- \underline{f})} L_f n\, \|\varphi^i - \hat \varphi^i\|_t, \text{ and } I_3 \le  \beta e^{ \beta (\overline{f}- \underline{f})} (1 + L_f n) \, \|\hat \varphi^i\|_t\, \|\hat \varphi^i - \varphi^i\|_t, 
	\end{align*}
	with $L_f$ being the global Lipschitz constant of $f$ on $B_n = \{ x \in R^d \colon |x| \le n\}$ and $\underline f, \overline f$ are the minimal and maximal value of $f$ on $B_n,$ respectively.
	Altogether, this yields the estimate
	\[
	\left|p_f^i[\varphi](t) - p_f^i[\hat \varphi](t) \right| \le \Big( 1+ (1 + 2L_f)   \beta n e^{ \beta (\overline{f}- \underline{f})}  \Big)\, \|\hat \varphi^i - \varphi^i\|_t.
	\]
	For the vectors $p_f(t) = (p_f^i[\varphi](t))_{i=1,\dots,N}$ and $\hat p_f(t) = (\hat p_f^i[\varphi](t))_{i=1,\dots,N}$ this implies
	\[
	|p_f(t) - \hat p_f(t)|^2 = \sum_{i=1}^N \left|p_f^i[\varphi](t) - p_f^i[\hat \varphi](t) \right|^2 \le \Big( 1+ (1 + 2L_f)   \beta n e^{ \beta (\overline{f}- \underline{f})}  \Big)\|\varphi - \hat \varphi\|_t^2.
	\]
	Similarly, we have
	\begin{align*}
	\left|v_f[\varphi(t)] - v_f[\hat \varphi(t)] \right| &\le \left| \frac{\sum_{i=1}^N \big(\varphi^i(t) - \hat \varphi^i(t)\big) e^{-\alpha f(\varphi^i(t))}}{\sum_{i=1}^N  e^{-\alpha f(\varphi^i(t))}} \right| + \left| \frac{\sum_{i=1}^N \hat \varphi^i(t) \left( e^{-\alpha f(\varphi^i(t))} - e^{-\alpha f(\hat \varphi^i(t))} \right)}{\sum_{i=1}^N  e^{-\alpha f(\varphi^i(t))}} \right| \\
	&\qquad+ \left| \frac{\sum_{i=1}^N \hat \varphi^i(t) e^{-\alpha f(\hat \varphi^i(t))} \left( \sum_{i=1}^N \hat \varphi^i(t) e^{-\alpha f(\hat \varphi^i(t))} - \sum_{i=1}^N \varphi^i(t) e^{-\alpha f(\varphi^i(t))} \right)}{\left( {\sum_{i=1}^N  e^{-\alpha f(\varphi^i(t))}}  \right) \left( {\sum_{i=1}^N  e^{-\alpha f(\hat \varphi^i(t))}} \right)} \right| \\
	&=: J_1 + J_2 + J_3,
	\end{align*}
	which satisfy
	\begin{align*}
	J_1 &\le |\varphi(t) - \hat \varphi(t)|_1, \quad J_2 \le \frac{\alpha n L_f e^{-\alpha \underline{f}}}{N}|\varphi(t) - \hat \varphi(t)|_1, \text{ and } J_3 \le  n e^{\alpha(\overline{f} - \underline{f})} \left( \frac{1}{N} + \alpha n L_f  \right) |\hat \varphi(t)- \varphi(t)|_1.
	\end{align*}
	Thus, we get
	\[
	\left|v_f[\varphi(t)] - v_f[\hat \varphi(t)] \right| \le \left( 1+ \frac{\alpha n L_f e^{-\alpha \underline{f}}}{N} + n e^{\alpha(\overline{f} - \underline{f})} \left( \frac{1}{N} + \alpha n L_f  \right) \right) |\hat \varphi(t) - \varphi(t)|_1.
	\]
	Taking squares leads to the estimate
	\[
	\left|v_f[\varphi(t)] - v_f[\hat \varphi(t)] \right|^2 \le \left( 1+ \frac{\alpha n L_f e^{-\alpha \underline{f}}}{N} + n e^{\alpha(\overline{f} - \underline{f})} \left( \frac{1}{N} + \alpha n L_f  \right) \right)^2 2^{N-1} |\hat \varphi(t) - \varphi(t)|^2.
	\]
\end{ }

\end{document}